\documentclass[11pt]{amsart}
\usepackage{fullpage}
\usepackage{url}
\usepackage{amsfonts}
\usepackage{amsmath}
\usepackage{amssymb}
\usepackage{amsthm}
\usepackage{verbatim}
\usepackage{graphicx}

\newtheorem{theorem}{Theorem}

\newtheorem{lemma}[theorem]{Lemma}

\def\RR{\mathbb{R}}
\def\rank{\operatorname{rank}}
\numberwithin{theorem}{section}
\numberwithin{equation}{section}
\begin{document}

\title{Typical Real Ranks of Binary Forms}
\author{Grigoriy Blekherman}
\maketitle
\begin{abstract}
We prove a conjecture of Comon and Ottaviani that typical real  Waring ranks of bivariate forms of degree $d$ take all integer values between $\lfloor \frac{d+2}{2}\rfloor$ and $d$. That is we show that for all $d$ and all $\lfloor \frac{d+2}{2}\rfloor \leq m \leq d$ there exists a bivariate form $f$ such that $f$ can be written as a linear combination of $m$ $d$-th powers of real linear forms and no fewer, and additionally all forms in an open neighborhood of $f$ also possess this property. Equivalently we show that for all $d$ and any $\lfloor \frac{d+2}{2}\rfloor \leq m \leq d$ there exists a symmetric real bivariate tensor $t$ of order $d$ such that $t$ can be written as a linear combination of $m$ symmetric real tensors of rank $1$ and no fewer, and additionally all tensors in an open neighborhood of $t$ also possess this property.
\end{abstract}

\section{Introduction}

Symmetric tensor decomposition (also known as the Waring problem) is usually studied over the complex numbers: given a multivariate form $f \in \mathbb{C}[x_1,\dots,x_n]$ of degree $d$ the problem asks to find the least integer number $m$ such that $f=\sum_{i=1}^m \ell_i^d$, where $\ell_i$ are linear forms. The number $m$ is known as the \textit{Waring rank} or \textit{symmetric tensor rank} of $f$. It is of significant interest to find efficient algorithms to compute a minimal representation of $f$ \cite{OO}. The symmetric tensor decomposition problem over the complex numbers has been widely investigated \cite{AH},\cite{MonomialSolution},\cite{ChiantiniCiro},\cite{LandsbergTeitler},\cite{Mella}. 

There is a unique generic rank forms forms in $\mathbb{C}[x_1,\dots,x_n]_d$, which depends on the degree $d$ and the number of variables $n$. Alexander-Hirschowitz theorem gives the generic rank for all $n$ and $d$ \cite{AH}. The question of uniqueness, and more generally, the variety of all possible decompositions for a form $f$ has also been studied extensively \cite{ChiantiniCiro},\cite{Mella}.

The same question can be asked over the real numbers, which often makes more sense with respect to potential applications. Given a multivariate form $f \in \mathbb{R}[x_1,\dots,x_n]$ of degree $d$ we now ask to find the least integer number $m$ such that $f=\sum_{i=1}^m \ell_i^d$ where $\ell_i\in \RR[x_1,\dots,x_n]$ are \textit{real} linear forms. The theory of real symmetric tensor decompositions is not nearly as developed. One obstacle is that there may be more than one ``generic" rank. Following Comon and Ottaviani in \cite{OC} we will call any rank that occurs on an open subset of $\mathbb{R}[x_1,\dots,x_n]_d$ (with the Euclidean topology induced by viewing $f$ as a vector of coefficients) a typical rank.

The complex tensor decomposition problem in two variables has been already well-understood by Sylvester and completely and algorithmically solved by Comas and Seiguer in \cite{Comas}. In contrast for binary forms over $\RR$ the list of all typical ranks was conjectured by Comon and Ottaviani \cite{OC}. In this paper we prove this conjecture and solve the problem of finding all typical ranks for real binary forms. It has already been observed by Reznick that all ranks $m$ with $1 \leq m \leq d$ occur as real ranks of binary in $\mathbb{R}[x,y]_d$ \cite{RezInertia}. It was noted by Comon and Ottaviani that only ranks $m$ with $\lfloor\frac{d+2}{2}\rfloor \leq m \leq d$ may be typical ranks for forms in $\mathbb{R}[x,y]_d$. They showed that all such ranks occur for $d\leq 5$ and later this was proved by Ballico for $d \leq 7$ in \cite{Ballico}. It was also shown in \cite{OC} that ranks $\lfloor\frac{d+2}{2}\rfloor $ and $d$ are typical. The case of real ranks of bivariate monomials was completely analyzed in by Boij, Carlini and Geramita in \cite{Monomials}. The case of Waring ranks of binary forms over some field extensions of $\RR$ was analyzed by Reznick in \cite{RezBinaryLength}. Finally we note that the question of decompositions of real forms $f$ of even degree as sums of $2d$-th powers of linear forms is related to the truncated moment problem in real analysis \cite{LasserreBook}, \cite{Rez3}.

\section{Proofs}

Let $f \in \RR[x,y]_d$ be given by $f=\sum_{i=0}^d a_ix^iy^{d-i}$. We can associate to $f$ the differential operator $\partial f$ given by $$\partial f=\sum_{i=0}^d a_i\frac{\partial^d}{\partial x^i\partial y^{d-i}}.$$ The \textit{apolar ideal} of $f$ denoted by $f^\perp$ is the ideal of all forms in $\RR[x,y]$ whose differential operator annihilates $f$:
$$f^\perp=\{h \in \RR[x,y] \,\mid \, \partial h(f)=0\}.$$

The following is a special bivariate version of the general Apolarity Lemma \cite{IarrKanev}. It was already known to Sylvester \cite{RezInertia}.
\begin{lemma}[Apolarity Lamma] \label{LEMMA apolarity} Let $f \in \RR[x,y]_d$ be a bivariate form of degree $d$. The form $f$ can be written as a linear combination of $d$-th powers of linear forms
$$f=\sum_{i=1}^rc_i(a_ix+b_iy)^d,$$
if and only if the form $p=(b_1x-a_1y)\cdots (b_rx-a_ry)$ is in the apolar ideal $f^{\perp}$.
 \end{lemma}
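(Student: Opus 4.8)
The plan is to follow Sylvester's argument, which rests on two elementary properties of the apolarity pairing that I would record first. For $g,h\in\RR[x,y]_k$ write $\langle g,h\rangle:=\partial h(g)\in\RR$. In the monomial basis this pairing is diagonal, $\langle x^iy^{k-i},x^jy^{k-j}\rangle=\delta_{ij}\,i!\,(k-i)!$, so it is nondegenerate on each $\RR[x,y]_k$. Also, constant-coefficient differential operators compose and commute: $\partial(gh)=\partial g\circ\partial h$ as operators on $\RR[x,y]$. In particular, for a linear form $\ell=ax+by$ and its ``dual'' $q=bx-ay$ a one-line computation gives $\partial q(\ell^k)=k(ab-ab)\ell^{k-1}=0$ for all $k\ge1$.

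For the forward implication, assume $f=\sum_{i=1}^r c_i\ell_i^d$ with $\ell_i=a_ix+b_iy$ and set $q_i=b_ix-a_iy$, so that $p=q_1\cdots q_r$ and $\partial p=\partial q_1\circ\cdots\circ\partial q_r$. Since the $\partial q_j$ commute, for each $i$ I may let $\partial q_i$ act first; because $\partial q_i(\ell_i^d)=0$ this forces $\partial p(\ell_i^d)=0$, and summing over $i$ gives $\partial p(f)=0$, i.e.\ $p\in f^{\perp}$.

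For the converse I would argue by a dimension count. We may assume the $q_i$ (equivalently the $\ell_i$) are pairwise non-proportional, which is the only case used in the paper, and in particular $r\le d$; the cases $r\ge d+1$ are trivial, since then $\partial p$ is an operator of order $>d$ so $p\in f^{\perp}$ for every $f$, while any $d+1$ of the distinct powers $\ell_1^d,\dots,\ell_r^d$ already form a basis of $\RR[x,y]_d$. So suppose $r\le d$ and consider $\partial p\colon\RR[x,y]_d\to\RR[x,y]_{d-r}$. Relative to the pairings above, its adjoint is multiplication by $p$, since $\langle\partial p(g),h\rangle=\partial h(\partial p(g))=\partial(hp)(g)=\langle g,hp\rangle$; this adjoint is injective because $\RR[x,y]$ is an integral domain, so $\partial p$ is surjective and hence $\dim\ker\partial p=(d+1)-(d-r+1)=r$. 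On the other hand, by the computation in the forward implication each $\ell_i^d$ lies in $\ker\partial p$, so $W:=\operatorname{span}\{\ell_1^d,\dots,\ell_r^d\}\subseteq\ker\partial p$; and since $r\le d+1$ the powers $\ell_1^d,\dots,\ell_r^d$ of distinct linear forms are linearly independent (a Vandermonde determinant), whence $\dim W=r=\dim\ker\partial p$ and therefore $W=\ker\partial p$. Finally, the hypothesis $p\in f^{\perp}$ says precisely that $f\in\ker\partial p=W$, i.e.\ $f=\sum_{i=1}^r c_i\ell_i^d$ for suitable scalars $c_i$, which is the desired conclusion.

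The only point here that goes beyond bookkeeping is the surjectivity of $\partial p$ on the degree-$d$ piece, and even that reduces to identifying the adjoint of $\partial p$ with multiplication by $p$ through the apolarity pairing; the remaining inputs, namely the nondegeneracy of the pairing coming from the monomial computation, the composition law $\partial(gh)=\partial g\circ\partial h$, the vanishing $\partial q(\ell^k)=0$, and the Vandermonde independence of powers of distinct linear forms, are all routine.
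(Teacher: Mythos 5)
The paper states this lemma without proof, citing Iarrobino--Kanev and noting it goes back to Sylvester, so there is no in-paper argument to compare against. Your proof is correct and complete: the forward direction via commutativity of the $\partial q_i$ and $\partial q_i(\ell_i^d)=0$, and the converse via the adjoint identity $\langle\partial p(g),h\rangle=\langle g,ph\rangle$, forcing surjectivity of $\partial p$ and hence $\dim\ker\partial p=r$, which is then matched by the $r$ linearly independent powers $\ell_i^d$. You are also right to flag the implicit hypothesis that the $q_i$ are pairwise non-proportional; without it the converse fails (e.g.\ $f=x^{d-1}y$, $p=y^2$ has $p\in f^\perp$ but $f$ is not a multiple of $x^d$), and distinctness is indeed all the paper ever uses. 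This is the standard Sylvester-style dimension-count argument and is exactly what a careful expansion of the cited result would give.
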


From the Apolarity Lemma we see that $\rank f=m$ if and only if $m$ is the lowest degree for which $f^\perp$ contains a form with all roots real and distinct.
The structure of bivariate apolar ideals is highly regular. It was already shown by Sylvester that all bivariate apolar ideals are complete (empty) intersections and the converse also holds.

\begin{theorem}\label{LEMMA degrees}
Let $f \in \mathbb{R}[x,y]_d$ then $f^{\perp}$ is a complete intersection ideal over $\mathbb{C}$, i.e $f^\perp$ is generated by two real forms $g_1,g_2$ such that $\deg g_1+\deg g_2=d+2$ and $\mathcal{V}_{\mathbb{C}}(g_1,g_2)=\emptyset$. Conversely, any two such forms $g_1,g_2$ generate an ideal $f^{\perp}$ for some $f\in \RR[x,y]$ of degree $\deg g_1+\deg g_2-2$.
\end{theorem}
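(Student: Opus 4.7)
\textit{Plan.} My approach would go through the classical Macaulay inverse-system framework. I would begin by observing that $f^\perp$ is a graded $\mathfrak{m}$-primary ideal: applying $\partial(x^{d+1})$ or $\partial(y^{d+1})$ to a degree-$d$ form gives zero, so $x^{d+1}, y^{d+1} \in f^\perp$, and $R/f^\perp$ is Artinian (where $R=\RR[x,y]$ or $\co[x,y]$, interchangeably). The crucial structural fact is that the apolarity pairing
$$R_k \times R_{d-k} \to \RR,\qquad (g,h)\mapsto \partial(gh)(f)$$
descends, modulo $f^\perp$, to a perfect pairing between degrees $k$ and $d-k$ of $R/f^\perp$. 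Thus $R/f^\perp$ is a graded Artinian Gorenstein algebra of socle degree $d$ with $f$ as its Macaulay dual generator; everything goes through identically over $\co$.

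\textit{Forward direction.} Given Gorenstein-ness, I would invoke the classical structure theorem that a codimension-two graded Artinian Gorenstein quotient of a polynomial ring is a complete intersection. The cleanest route is via the minimal graded free resolution: by Hilbert's syzygy theorem the resolution of $R/f^\perp$ over $\co[x,y]$ has length exactly $2$, say
$$0 \to F_2 \to F_1 \to R \to R/f^\perp \to 0;$$
Gorenstein-ness forces $\rank F_2 = 1$, and comparing Hilbert series then forces $\rank F_1 = 2$, giving two generators $g_1, g_2$. Since $f^\perp$ is defined over $\RR$, each graded piece has a real basis, so the $g_i$ may be chosen real. The Artinian condition translates to $\vc(g_1,g_2)=\emptyset$, and the standard socle-degree formula for a codim-$2$ complete intersection, $\deg g_1 + \deg g_2 - 2 = d$, yields $\deg g_1 + \deg g_2 = d+2$.

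\textit{Converse and main obstacle.} For the converse, given real forms $g_1,g_2$ with $\deg g_1 + \deg g_2 = d+2$ and $\vc(g_1,g_2)=\emptyset$, they form a regular sequence in $\co[x,y]$; the Koszul complex is then a free resolution of $R/(g_1,g_2)$, and from it one reads that $R/(g_1,g_2)$ is Artinian Gorenstein with socle degree $d$. Macaulay's inverse-system bijection produces a form $f \in R_d$, unique up to scalar, with $f^\perp = (g_1,g_2)$; since the ideal is stable under complex conjugation, $f$ may be chosen real. The substantive step throughout is the codim-$2$ Gorenstein $\Rightarrow$ complete intersection implication. If one prefers to avoid homological machinery, a direct alternative is to take $g_1 \in f^\perp$ of minimal positive degree $a$ and use the symmetry $\dim(R/f^\perp)_k = \dim(R/f^\perp)_{d-k}$ together with the fact that Hilbert functions of graded quotients of $\co[x,y]$ drop by at most one per step; this pins down both the shape of the Hilbert function and the existence of a second generator in degree $d+2-a$. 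The remaining pieces, realness of the generators and the Koszul-based converse, are then routine.
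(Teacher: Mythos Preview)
The paper does not actually prove this theorem: it is stated as a classical fact attributed to Sylvester, with implicit reference to \cite{IarrKanev} and \cite{RezInertia}, and no argument is supplied. So there is no ``paper's own proof'' to compare against.

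Your outline is correct and is precisely the standard modern argument one finds in the cited references. The key chain---$R/f^\perp$ is Artinian Gorenstein via the perfect apolarity pairing; codimension-two Gorenstein forces a complete intersection via the minimal free resolution (rank~$1$ at the back end, hence rank~$2$ in the middle by the alternating-rank identity); the socle-degree formula $\deg g_1+\deg g_2-2=d$ from the Koszul resolution; and Macaulay duality for the converse---is exactly right. The alternative you sketch, reading off a second generator in degree $d+2-a$ directly from the symmetric Hilbert function and the ``drops by at most one'' property in two variables, is the more elementary route Sylvester-style treatments take and is also fine. Your remarks on realness (real graded pieces give real minimal generators; conjugation-stability of the ideal forces a real dual socle generator) are adequate for the level of detail here.
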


It is well known that the forms $f \in \RR[x,y]_d$ for which the generator degrees of $f^{\perp}$ are not equal to $\left(\frac{d+2}{2},\frac{d+2}{2}\right)$ if $d$ is even or $\left(\frac{d+1}{2},\frac{d+3}{2}\right)$ if $d$ is odd lie on a Zariski closed subset of $\RR[x,y]_d$. This can be seen for instance by considering the middle catalecticant matrix of $f$ and observing that these generator degrees occur precisely when the middle catalecticant matrix is not of full rank \cite{RezInertia}.  
Therefore when $f^{\perp}$ is generated by forms of degrees $\left(\frac{d+2}{2},\frac{d+2}{2}\right)$ if $d$ is even or $\left(\frac{d+1}{2},\frac{d+3}{2}\right)$ if $d$ is odd we will say that $f^\perp$ is generated \textit{in generic degrees}.

We now observe that an essential obstruction to $f$ being a typical form of rank $m$ is that $(f^\perp)_{m-1}$ may contain a form with all real roots, but no forms with all real \textit{distinct} roots. Then perturbing $f$ may result in a form of rank $m-1$. As the next Lemma shows, this is the only obstruction to $f$ being typical if $f^\perp$ is generated in generic degrees.

\begin{lemma}\label{LEMMA generic}
Let $f \in \mathbb{R}[x,y]_d$ such that $f^{\perp}$ is generated in generic degrees and let $m=\rank f$. Then $f$ is a typical form if and only if all forms in $(f^\perp)_{m-1}$ have (at least a conjugate pair of) complex roots.\end{lemma}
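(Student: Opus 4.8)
\medskip
\noindent\textbf{Proof proposal.}
The plan is to read the condition as saying that $(f^{\perp})_{m-1}$ contains no nonzero \emph{real-split} form---one all of whose roots in $\mathbb{P}^1(\mathbb{C})$ are real---and to control how the graded pieces $((f')^{\perp})_{m-1}$ and $((f')^{\perp})_m$ move as $f'$ varies in a Euclidean neighborhood of $f$. Two elementary topological facts will be the engine. Since the multiset of roots of a binary form depends continuously on its coefficients and $\mathbb{P}^1(\mathbb{R})$ is closed in $\mathbb{P}^1(\mathbb{C})$, the set of real-split forms of a fixed degree is closed in $\mathbb{P}(\RR[x,y]_k)$; on the other hand the set of forms whose roots are all real \emph{and simple} is open, since a simple real root of a real binary form persists and stays real under a small perturbation (a colliding conjugate pair near it would force two roots of the perturbed form to cluster at a simple root). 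Moreover, ``generated in generic degrees'' is a Zariski-open condition, so there is a Euclidean neighborhood $U$ of $f$ on which $(f')^{\perp}$ is generated in generic degrees; hence, using the complete-intersection structure of Theorem \ref{LEMMA degrees}, the Hilbert function of $\RR[x,y]/(f')^{\perp}$ is the same for all $f'\in U$, so each $\dim ((f')^{\perp})_k$ is constant on $U$, the relevant catalecticant of $f'$ has constant rank there, and $f'\mapsto ((f')^{\perp})_k$ is a continuous map into a Grassmannian on $U$.

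For the ``if'' direction I would first bound $\rank f'\le m$ for $f'$ near $f$: because $\rank f=m$, the space $(f^{\perp})_m$ contains a form $p_0$ with $m$ distinct real roots, and the orthogonal projection of $p_0$ onto $((f')^{\perp})_m$ depends continuously on $f'$ and equals $p_0$ at $f'=f$, so for $f'$ near $f$ it is a nonzero element of $((f')^{\perp})_m$ with $m$ distinct real roots; the Apolarity Lemma then gives $\rank f'\le m$. For the reverse bound, the hypothesis says that the compact set $\mathbb{P}\big((f^{\perp})_{m-1}\big)$ is disjoint from the closed set $S$ of real-split forms in $\mathbb{P}(\RR[x,y]_{m-1})$; two disjoint compact sets are a positive distance apart, so by the continuity above $\mathbb{P}\big(((f')^{\perp})_{m-1}\big)$ is also disjoint from $S$ for $f'$ near $f$. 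Thus $((f')^{\perp})_{m-1}$ contains no real-split form; since any real-split form of degree $\ell\le m-1$ in $(f')^{\perp}$ could be multiplied by real linear forms to produce a real-split form of degree exactly $m-1$ in $((f')^{\perp})_{m-1}$, this forces $\rank f'\ge m$. Intersecting the two neighborhoods, $\rank f'=m$ near $f$, i.e.\ $f$ is typical.

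For the ``only if'' direction I would prove the contrapositive. Suppose $(f^{\perp})_{m-1}$ contains a real-split form $p$; since $\rank f=m$, $p$ cannot have all roots distinct, so it has a repeated (necessarily real) root, and ``spreading apart'' the multiple roots produces, for small $t$, a form $p_t\in\RR[x,y]_{m-1}$ with $m-1$ distinct real roots such that $p_t\to p$ as $t\to0$. Put $W_t=\{g\in\RR[x,y]_d:\partial p_t(g)=0\}$; by the Apolarity Lemma every $g\in W_t$ has $\rank g\le m-1$. The crucial observation is that $\partial p_t$ on $\RR[x,y]_d$ is the transpose, with respect to the apolar pairings, of multiplication by $p_t$ from $\RR[x,y]_{d-m+1}$ into $\RR[x,y]_d$; the latter is injective whenever $p_t\neq0$, so $\partial p_t$ is surjective and $\dim W_t=(d+1)-(d-m+2)=m-1$ for \emph{every} $t$, including $t=0$---here one uses only $p\neq0$, not squarefreeness. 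Hence $W_t\to W_0$ in $\mathrm{Gr}(m-1,\RR[x,y]_d)$, and $f\in W_0$ since $\partial p(f)=0$; picking $f_t\in W_t$ with $f_t\to f$ exhibits forms of rank at most $m-1$ arbitrarily close to $f$, so $f$ is not typical.

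The step I expect to be the main obstacle is the dimension bookkeeping that underlies the continuity of $f'\mapsto ((f')^{\perp})_k$ on $U$: one must use the generic-degrees hypothesis together with the complete-intersection structure of Theorem \ref{LEMMA degrees} to fix the Hilbert function on an entire neighborhood, and, separately, verify in the converse direction that $\dim W_t$ does not drop at $t=0$ even though $p$ is not squarefree. The other ingredients---continuity of roots, closedness of the real-split locus, openness of ``all roots real and simple,'' the multiplication trick, and the two applications of the Apolarity Lemma---are routine.
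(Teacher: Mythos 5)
Your proposal is correct and follows essentially the same approach as the paper: for the ``only if'' direction, perturb the real-split form $p$ to a squarefree real-split $p_t$ and use constancy of $\dim\ker\partial p_t$ (which you justify cleanly via the transpose-is-multiplication observation) to produce nearby forms of rank $\le m-1$; for the ``if'' direction, use the generic-degrees condition to get continuous dependence of $((f')^\perp)_k$ on $f'$, combined with closedness of the real-split locus and persistence of simple real roots. You fill in a couple of details the paper hand-waves (the constant-rank justification and the reduction from degree $\ell\le m-1$ to degree $m-1$ by multiplying by linear forms), but the essential logic is identical.
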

\begin{proof}

First suppose that $(f^\perp)_{m-1}$ contains a form with all real roots, call this form $s$. For any $\epsilon >0$ there exists a form $r_\epsilon \in \RR[x,y]_{m-1}$ in the $\epsilon$-neighborhood of $s$ such that $s+r(\epsilon)$ has distinct real zeroes. Let $h_\epsilon=\partial (s+r_\epsilon)(f)=\partial r_\epsilon(f)$ and consider the map $T_{\epsilon}: \RR[x,y]_d \rightarrow \RR[x,y]_{d-m+1}$ given by $T(f)=\partial (s+r_{\epsilon})(f)$.  Since $T_{\epsilon}$ is a linear map, which for small enough $\epsilon$ is sufficiently close to just applying $\partial s$, we can find a form $g_{\epsilon} \in \RR[x,y]_d$ such that $T_{\epsilon}(g_{\epsilon})=-h_{\epsilon}$ and $g_{\epsilon}$ approaches $0$ as $\epsilon$ goes to $0$. Then we see that $\partial (s+r_{\epsilon})(f+g_{\epsilon})=0$ and by Apolarity Lemma there exist forms of rank at most $m-1$ in any $\epsilon$-neighborhood of $f$, which is a contradiction.

For the other direction, we note that all forms $h$ in a sufficiently small neighborhood of $f$ will have $h^{\perp}$ generated in generic degrees. In this neighborhood of $f$ the ideal $h^{\perp}$ depends continuously of the coefficients of $h$. This can be seen, for example, by noting that $h^{\perp}$ consists of the forms in the kernels of the catalecticant matrices of $h$ \cite{RezInertia}. As long as $h^{\perp}$ is generated in generic degrees the dimensions of the kernels do not change and therefore there is a continuous dependence. 
Now perturb the coefficients of $f$ slightly and call the resulting form $h$. For a small enough perturbation, given the continuos dependence of $h^{\perp}$ on the coefficients of $h$ we can ensure that $(h^{\perp})_{m-1}$ has no forms with all real roots, while $(h^{\perp})_m$ has such a form. Thus the rank of $h$ is $m$, where $h$ is any sufficiently small perturbation of the coefficients of $f$. 
\end{proof}

\begin{theorem}
All ranks $m$ with $\lfloor\frac{d+2}{2}\rfloor \leq m \leq d$ are typical for forms in $\mathbb{R}[x,y]_d$.
\end{theorem}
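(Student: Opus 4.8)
The plan is to work entirely with apolar ideals. Fix $m$ with $d_1 := \lfloor\frac{d+2}{2}\rfloor \le m \le d$ and put $d_2 := d+2-d_1$, so $d_1 \le d_2$, $d_2 - d_1 \in \{0,1\}$, and $(d_1,d_2)$ are exactly the generic generator degrees. By the converse part of Theorem \ref{LEMMA degrees} it suffices to construct two real forms $g_1,g_2$ with $\deg g_i = d_i$ and $\vc(g_1,g_2)=\emptyset$: then $(g_1,g_2)=f^\perp$ for some $f\in\RR[x,y]_d$, and this $f^\perp$ is automatically generated in generic degrees. Writing $I=(g_1,g_2)$, the Apolarity Lemma and Lemma \ref{LEMMA generic} reduce the theorem to choosing $g_1,g_2$ so that \textbf{(i)} some form in $I_m$ has $m$ distinct real roots, and \textbf{(ii)} every nonzero form in $I_{m-1}$ has a non-real root. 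Indeed (i) gives $\rank f\le m$ by the Apolarity Lemma; (ii) forces every nonzero form of degree $\le m-1$ in $f^\perp$ to have a non-real root (if $h\in I_k$ with $k\le m-1$ were real-rooted, then $h$ times a power of a real linear form would be a real-rooted element of $I_{m-1}$), whence $\rank f\ge m$; and (ii) is exactly the hypothesis of Lemma \ref{LEMMA generic} that makes $f$ typical. Thus the statement becomes a family of explicit construction problems, one per admissible $m$.

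Next I would isolate the constraints and clear the easy cases. Since a nonzero multiple $g_1p$ inherits any non-real root of $g_1$, if $m\le d_2$ then $I_{m-1}=g_1\cdot\RR[x,y]_{m-1-d_1}$ consists of multiples of $g_1$, and (ii) holds as soon as $g_1$ has a complex conjugate pair of roots; (i) is then a finite-dimensional linear-algebra problem in the remaining coefficients of $g_2$. This disposes of $m=d_1$ (take $g_1$ with $d_1$ distinct real roots) and, when $d$ is odd, of $m=d_2$; the extreme cases $m=d_1$ and $m=d$ also recover what is already in \cite{OC}. The genuinely substantial range is $d_2<m\le d$. Here $I_{m-1}=g_1\RR[x,y]_{m-1-d_1}+g_2\RR[x,y]_{m-1-d_2}$ involves both generators (and has dimension $2m-d-2$, since $g_1,g_2$ are coprime and $d_1+d_2=d+2>m-1$), so both $g_1$ and $g_2$ must have non-real roots, and I would look for $g_1,g_2$ as explicit families depending on a small number of parameters: $g_1$ a perturbation of a power of $x^2+y^2$ (so that every multiple of $g_1$ is robustly far from real-rooted), and $g_2$ a product of a controlled number of distinct real linear forms — the number growing with $m$ — times a positive definite factor. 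With such $g_1,g_2$, condition (i) is checked by solving the linear system prescribing $m$ distinct real roots for an element of $I_m$, e.g. matching coefficients against $x^{\varepsilon}\prod_i(x^2-s_i^2y^2)$ with $\varepsilon\in\{0,1\}$ determined by the parity of $m$, and verifying the system is consistent; $\vc(g_1,g_2)=\emptyset$ is checked by evaluating $g_2$ at the non-real roots of $g_1$.

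The main obstacle is condition (ii) in the range $d_2<m\le d$: one must show that an entire $(2m-d-2)$-dimensional linear space of binary forms of degree $m-1$ contains no real-rooted form — a uniform statement, not a check on a single form. My plan for this is to bound, uniformly over $p,q$, the number of real roots of $g_1p+g_2q$ of degree $m-1$ by a quantity strictly smaller than $m-1$: the factor of $x^2+y^2$ in $g_1$ pins down a fixed block of non-real roots, and a perturbation by $g_2q$, whose own real locus is controlled by the few real linear factors of $g_2$, cannot turn all of them real — concretely, a Rolle/Descartes-type count for forms of the shape (positive definite)$\cdot p\ +\ $(few real linear factors)(positive definite)$\cdot q$. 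Calibrating the number of real linear factors allowed in $g_2$ against $m$, so that this count sits just below $m-1$ in degree $m-1$ yet still permits $m$ distinct real roots one degree up, is the delicate point, and it is what makes the construction work uniformly across the whole interval $\lfloor\frac{d+2}{2}\rfloor\le m\le d$.
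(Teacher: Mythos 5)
Your reduction in the first two paragraphs is correct and matches how one should think about the problem: by Theorem~\ref{LEMMA degrees}, the Apolarity Lemma and Lemma~\ref{LEMMA generic}, it suffices to exhibit coprime real forms $g_1,g_2$ of the generic degrees $(d_1,d_2)$ such that (i) $I_m$ contains a form with $m$ distinct real roots and (ii) every nonzero form in $I_{m-1}$ has a non-real root, where $I=(g_1,g_2)$; and you correctly argue that (ii) alone (via multiplying by powers of a real linear form) already forces $\rank f\ge m$. The easy cases $m\le d_2$ are also handled correctly.

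The problem is that the third paragraph is not a proof but a statement of exactly where the difficulty lies. In the range $d_2<m\le d$, condition (ii) asks that the entire $(2m-d-2)$-dimensional linear space $I_{m-1}=g_1\RR[x,y]_{m-1-d_1}+g_2\RR[x,y]_{m-1-d_2}$ meet the (full-dimensional, semialgebraic) set of real-rooted forms only at $0$, while one degree up $I_m$ must meet the real-rooted locus in a form with \emph{distinct} real roots. Your proposed mechanism --- write $g_1$ as a perturbation of a power of $x^2+y^2$ and $g_2$ as a product of a few real linear factors times a positive-definite form, then ``bound the number of real roots of $g_1p+g_2q$ uniformly via a Rolle/Descartes-type count'' --- is not an argument. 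The pair $(p,q)$ ranges over a whole linear space, so $g_2q$ is not a small perturbation of $g_1p$ (by scaling it can dominate), and there is no obvious reason that non-real roots of $g_1$ ``cannot all be turned real.'' In fact making this uniform avoidance statement precise, and calibrated so that it fails by exactly one degree, is the entire content of the theorem; the proposal leaves it as an unproved claim.

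The paper avoids this uniform-over-a-linear-space difficulty altogether by induction on $d$. Given a typical form $f\in\RR[x,y]_d$ of rank $m$ whose apolar ideal $f^\perp=\langle p_1,p_2\rangle$ is generated in generic degrees, it builds the degree-$(d+1)$ apolar ideal as $I=\langle p_1,\ell p_2\rangle$ (or $\langle \ell p_1, p_2'\rangle$) for a carefully chosen linear form $\ell$. Since $I\subset f^\perp$, condition (ii) is \emph{inherited for free} from the inductive hypothesis (Lemma~\ref{LEMMA generic} applied to $f$), and all the work goes into (i): choosing $\ell$, after a suitable change of generators, so that a fixed real-rooted $s\in(f^\perp)_m$ still lies in $I$. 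That choice of $\ell$ is the only delicate step and is handled by a short contradiction argument. If you want to salvage your direct approach you would need to actually prove the root-count claim; as written it is a genuine gap, and the inductive route is what removes the need for any such uniform estimate.
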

\begin{proof}
We use induction on the degree $d$. The base case $d=2$ is just bivariate quadratic forms and the real rank corresponds to the usual rank of the matrix. Therefore there is only one typical rank, which is $2$.

Inductive Step: $d \implies d+1$. We first note that it was already shows in \cite{OC} that rank $d+1$ is typical for forms in $\mathbb{R}[x,y]_{d+1}$. Suppose that $f \in \mathbb{R}[x,y]_d$ is a typical form of rank $\lfloor \frac{d+3}{2} \rfloor \leq m \leq d$. By perturbing $f$ we may assume that the apolar ideal $f^\perp$ is generated in generic degrees.

Suppose $d=2k$ is even. Then $f^{\perp}$ is generated by forms $p_1,p_2$ with $\deg p_1=\deg p_2=k+1$. First suppose that $m=k+1$. We may choose a generator $p_1 \in (f^\perp)_m$ such that $p_1$ has all real distinct roots and let $p_2$ be a form in $(f^\perp)_m$ linearly independent from $p_1$. Now let $\ell \in \RR[x,y]_1$ be any linear form such that the zero of $\ell$ is not one of the zeroes of $p_1$ and consider the ideal $I=\langle p_1,\ell p_2 \rangle$. The forms $p_1$ and $\ell p_2$ form a complete intersection over $\mathbb{C}$. By Theorem \ref{LEMMA degrees}, $I$ is the apolar ideal of some form $g \in \RR[x,y]_{d+1}$. Since we have $g^{\perp} \subset f^\perp$ By Lemma \ref{LEMMA generic} we know that $g$ is a typical form of rank $m$.

Now suppose that $m >k+1$. By Apolarity Lemma there exists $s\in (f^\perp)_m$ such that $s$ has all real distinct roots and by Lemma \ref{LEMMA generic} we know that all forms in $(f^\perp)_{m-1}$ have at least $2$ complex roots.   Since $s \in (f^\perp)_m$ we can write  $s=p_1q_1+p_2q_2$ for $q_1,q_2 \in \RR[x,y]_{m-k-1}$. We now claim that we may choose two generators $p_1$ and $p_2$ of $f^\perp$ so that the multiplier $q_2$ has a real root distinct from the roots of $p_1$. If this does not hold then we may pick a different set of generators of $f^\perp$: let $p_1'=p_1+\alpha p_2$ with some $\alpha \in \RR$. Then $s=p_1'q_1+p_2(q_2-\alpha q_1)$. We can easily adjust $\alpha$ so that $q_2- \alpha q_1$ has a real root, and we need to argue that we can also make this root distinct from the roots of $p_1'=p_1+\alpha p_2$. Suppose not, then for any $(a.b) \in \RR^2$ that is not a root of $q_1$ we may set $\alpha=-q_2(a,b)/q_1(a,b)$ and make $(a,b)$ a root of $q_2-\alpha q_1$. Therefore we must have $\frac{p_1}{p_2}=-\frac{q_2}{q_1}$ which implies that $s=p_1q_1+p_2q_2=0$ and that is a contradiction. Thus we have $q_2-\alpha q_1=\ell q$ with $\ell \in \RR[x,y]_1$, $q \in \RR[x,y]_{k-m-2}$ and $\ell$ does not divide $p_1'$. Let $I=\langle p_1', \ell p_2 \rangle$. As before, $p_1'$ and $\ell p_2$ form a compete intersection over $\mathbb{C}$ and by Theorem \ref{LEMMA degrees} $I$ is the apolar ideal of some form $g \in \RR[x,y]_{d+1}$. Since $s \in I$ we know that the rank of $g$ is at most $m$ and since $I \subset f^\perp$ we know that the rank of $g$ is at least $m$. Therefore the rank of $g$ is $m$. Further, $g^\perp \subset f^\perp$ has no forms of degree $m-1$ with all real roots and $g^\perp$ is generated in generic degrees. Therefore $g$ is a typical form of rank $m$.

Now suppose that $d=2k+1$ is odd. Then $f^{\perp}$ is generated by forms $p_1,p_2$ with $\deg p_1=k+1$ and  $\deg p_2=k+2$. We note that we only need to deal with the cases $m \geq k+2$. By Apolarity Lemma there exists $s\in (f^\perp)_m$ such that $s$ has all distinct real roots and by Lemma \ref{LEMMA generic} all forms in $(f^\perp)_{m-1}$ have at least $2$ complex roots. Since $s \in (f^\perp)_m$ we can write  $s=p_1q_1+p_2q_2$ for $q_1\in \RR[x,y]_{m-k-1}$ and $q_2 \in\RR[x,y]_{m-k-2}$. The generator $p_1$ is uniquely determined, but $p_2$ is unique only modulo the ideal generated by $p_1$. We now claim that we may choose generators of $f^\perp$ so that the multiplier $q_1$ has a real root distinct from the roots of $p_2$. If this does not hold then let $p_2'=p_2+\ell p_1$ for some linear form $\ell \in \RR[x,y]_1$. We have $s=p_1(q_1-\ell q_2)+p_2'q_2$. We can adjust $\ell$ so that $q_1- \ell q_2$ has a real root, and we need to argue that we may also make this root distinct from the roots of $p_2'=p_2+\ell p_1$. Arguing as before we must have $\frac{p_2}{p_1}=-\frac{q_1}{q_2}$ which implies that $s=p_1q_1+p_2q_2=0$ and that is a contradiction. Let $I=\langle \ell p_1,p_2' \rangle$. Since $\ell p_1$ and $p_2'$ form a compete intersection over $\mathbb{C}$ by Theorem \ref{LEMMA degrees} $I$ is the apolar ideal of some form $g \in \RR[x,y]_{d+1}$. Since $s \in I$ we know that the rank of $g$ is at most $m$ and since $I \subset f^\perp$ we know that the rank of $g$ is at least $m$. Therefore the rank of $g$ is $m$. Further, $g^\perp \subset f^\perp$ has no forms of degree $m-1$ with all real roots and $g^\perp$ is generated in generic degrees. Therefore $g$ is a typical form of rank $m$.
\end{proof}


\begin{thebibliography}{20}

\bibitem{AH} J. Alexander, A. Hirschowitz, \textit{Polynomial interpolation in several variables}, J. Algebraic Geom., 4(2) pp. 201-222, (1995).
\bibitem{Ballico} E. Ballico, \textit{On the typical rank of real bivariate polynomials,} arXiv:1204.3161v1.
\bibitem{Monomials} M. Boij, E. Carlini, A. V. Geramita, \textit{Monomials as sums of powers: the real binary case}, Proc. Amer. Math. Soc. \textbf{139},  pp. 3039-3043, (2011).
\bibitem{MonomialSolution} E. Carlini, M. V. Catalisano, A. V. Geramita, \textit{The Solution to Waring's Problem for Monomials,} arXiv:1110.0745v1.
\bibitem{Comas} G. Comas, M. Seiguer, \textit{On the rank of a binary form}, Found. Comput. Math., \textbf{11} pp. 65Ð78, (2010).
\bibitem{OC} P. Comon, G. Ottaviani, \textit{On the typical rank of real binary forms}, Linear and Multilinear Algebra
\bibitem{ChiantiniCiro} L. Chiantini, C. Ciliberto, \textit{Weakly defective varieties}, Trans. Amer. Math. Soc. \textbf{354}, no. 1, pp. 151-178 (2002).
\bibitem{IarrKanev}A. Iarrobino, V. Kanev, \textit{Power sums, Gorenstein algebras, and determinantal loci,} vol. 1721 of Lecture Notes in Mathematics. Springer-Verlag, Berlin, (1999).

\bibitem{LandsbergTeitler}J.M. Landsberg, Z. Teitler \textit{On the ranks and border ranks of symmetric tensors,} Found. Comput. Math. \textbf{10} , no. 3, 339-366 (2010).
\bibitem{LasserreBook}J.B. Lasserre,\textit{Moments, Positive Polynomials and Their Applications.} Imperial College Press, London, (2010).

\bibitem{Mella}M. Mella, \textit{Singularities of linear systems and the Waring problem}, Trans. Amer. Math. Soc. \textbf{358}, no. 12, pp. 5523-5538 (2006).
\bibitem{OO} L. Oeding, G. Ottaviani, \textit{Eigenvectors of tensors and algorithms for Waring decomposition}, arXiv:1103.0203v1.
\bibitem{RezInertia} B. Reznick, \textit{Laws of inertia in higher degree binary forms}, Proc. Amer. Math. Soc. \textbf{138}, pp. 815-826, (2010).
\bibitem{RezBinaryLength} B. Reznick, \textit{On the length of binary forms,} arXiv:1007.5485v1.
\bibitem{Rez3} B. Reznick, \textit{Sums of Even Powers of Real Linear Forms}, Mem. Amer. Math. Soc., vol. 96, no. 463, (1992).
\end{thebibliography}
\end{document}